\newcommand{\bbZ}{\mathbb Z}
\newcommand{\mcA}{\mathcal A}
\newcommand{\mcC}{\mathcal C}
\newcommand{\mcD}{\mathcal D}
\newcommand{\mcP}{\mathcal P}
\newcommand{\mcR}{\mathcal R}
\newcommand{\mcS}{\mathcal S}
\newcommand{\mcV}{\mathcal V}
\newcommand{\Ab}{\mathsf{Ab}}
\newcommand{\Cat}{\mathsf{Cat}}
\newcommand{\Ch}{\mathsf{Ch}}
\newcommand{\ob}{\mathop\mathsf{ob}\nolimits}
\newcommand{\VCat}{\mcV\textrm{-}\Cat}
\newcommand{\op}{\mathrm{op}}
\newcommand{\LRa}{\Leftrightarrow}
\newcommand{\xra}[1]{\xrightarrow{#1}}
\newcommand{\xlra}[1]{\xrightarrow{\ #1\ }}
\newcommand{\xlla}[1]{\xleftarrow{\ #1\ }}
\newcommand{\lra}{\longrightarrow}
\newcommand{\dra}{\xymatrix@1{\ar@{-->}[r]&}}
\newcommand{\cof}[1][]{\mathbin{\:\!\!\xymatrix@1@C=15pt{{}\ar@{ >->}[r]^{#1} & {}}}}
\newcommand{\fib}[1][]{\mathbin{\:\!\!\xymatrix@1@C=15pt{{}\ar@{->>}[r]^{#1} & {}}}}
\newcommand{\embed}[1][]{\mathbin{\:\!\!\xymatrix@1@C=15pt{{}\ar@{c->}[r]^{#1} & {}}}}
\newcommand{\rla}[1][]{\mathbin{\:\!\!\xymatrix@1@C=15pt{{}\ar@<2pt>[r]^{#1} & {}\ar@<2pt>[l]}}}
\newcommand{\pbscale}{.25}
\newcommand{\pboffset}{.5}
\newcommand{\xycorner}[3]{\save [];#2**{}?(\pbscale)="a",[];#1**{}?(\pbscale);"a"**{}?(\pboffset);"a"**\dir{-},[];#3**{}?(\pbscale);"a"**{}?(\pboffset);"a"**\dir{-} \restore}
\newcommand{\pb}[1][{1}]{\xycorner{[r]**{}?(#1)}{[dr]**{}?(#1)}{[d]**{}?(#1)}}
\newcommand{\ev}{\mathop\mathrm{ev}\nolimits}
\newcommand{\Hom}{\mathop\mathrm{Hom}\nolimits}
\newcommand{\leftbox}[2]{{}\phantom{#1} \save []+L*+<.5pc>!!<0pt,\the\fontdimen22\textfont2>!L{#1#2} \restore}
\newcommand{\rightbox}[2]{{}\phantom{#2} \save []+R*+<.5pc>!!<0pt,\the\fontdimen22\textfont2>!R{#1#2} \restore}
\theoremstyle{plain}
\newtheorem{theorem}{Theorem}
\newtheorem{proposition}[theorem]{Proposition}
\theoremstyle{definition}
\newtheorem{definition}[theorem]{Definition}
\newtheorem{example}[theorem]{Example}
\theoremstyle{remark}
\newtheorem*{remark}{Remark}
\newtheoremstyle{tiny}
     {10pt}
     {10pt}
     {\footnotesize}
     {}
     {\itshape}
     {.}
     {.5em}
     {}
\theoremstyle{tiny}
\newcommand{\MC}{\operatorname{\mathrm{MC}}}
\newcommand{\gA}{\mathsf{g}\mcA}
\newcommand{\dgA}{\mathsf{dg}\mcA}
\newcommand{\gZMod}{\mathsf{g}\Ab}
\newcommand{\dgZMod}{\mathsf{dg}\Ab}
\newcommand{\Sps}{\mcS_\mathsf{ps}}
\newcommand{\Spsloc}{\mcS_\mathsf{ps}^\mathsf{loc}}
\newcommand{\Spt}{\mcS_\mathsf{pt}}
\begin{document}

\author{L.\ Vok\v{r}\'{i}nek\footnote{%
The research of L.~V.~was supported by the Grant agency of the Czech republic under the grant 22-02964S.}}
\title{Enriched categorical aspects \\ of homological perturbation theory}
\date{\today}
\maketitle

\begin{abstract}
The paper presents an enriched categorical account of homological perturbation theory, including the formulation, proof and functoriality properties of the homological perturbation lemma.
\end{abstract}

\section{Introduction}

Homological perturbation theory originated in the study of homology of fibre bundles, see e.g.\ \cite{Shih}. Later, it found applications to homotopy coherent structures, since it allows to transfer such a structure along homotopy equivalences, see e.g.\ \cite{transfer}. In connection to these results, compatibility with various algebra and coalgebra structures was explored heavily. Similarly, in the original application to fibre bundles \cite{Shih}, compatibility with the maps in the Eilenberg-Zilber equivalence were of great importance. In algorithmic topology, see e.g.\ \cite{polypost}, perturbation lemma is used in order to construct recursively small models for chain complexes of various spaces, such as Eilenberg-MacLane spaces and their twisted cartesian products, hence Postnikov stages. Conceptually, and also with implementation in mind, it is useful to decompose the occuring homotopy equivalences into simpler pieces, e.g.\ the chain complex of a cartesian product admits a homotopy equivalence
\[C_*(B \times F) \simeq C_*B \otimes C_*F \simeq C_*^\mathrm{ef} B \otimes C_*^\mathrm{ef} F,\]
where the chain complexes decorated by $\mathrm{ef}$ are some small models for chain complexes of the base or the fibre. Now replacing the cartesian product by a twisted cartesian product amounts to perturbing the differential of the source. The perturbation lemma applied consecutively to the two occuring homotopy equivalences first propagates this perturbation to a perturbation on the tensor product $C_*B \otimes C_*F$ and then further to a perturbation on $C_*^\mathrm{ef} B \otimes C_*^\mathrm{ef} F$. What if we first compose the homotopy equivalences and then propagate the perturbation in a single step -- do we get the same perturbation? It would be really cumbersome if we had to deal with questions like this all the time. Our primary reason to study functoriality properties of the perturbation lemma was to settle down this and similar questions. Our secondary goal was to set up a categorical framework where perturbations have a reasonable definition and the perturbation lemma has a reasonable formulation (and ideally proof as well). A categorical definition of a perturbation reveals that it is an absolute limit. We conjecture that together with shifts (suspensions and desuspensions), biproducts and retracts, they generate all absolute limits. One of the consequences would be a concrete description of a Cauchy completion of a dg-category and thus of Morita equivalence.

In more detail, we define a $\delta$-perturbation $A_\delta$ of an object $A \in \mcC$, where $\delta \in \mcC(A, A)_{-1}$ satisfies the Maurer-Cartan equation $D\delta = -\delta^2$, by its universal mapping property. The object $A$ together with the perturbation $\delta$ forms a dg-diagram $\bbZ[\delta] \to \mcC$ with $\bbZ[\delta]$ a one object dg-category, i.e.\ a differential graded algebra, generated by $\delta$ of degree $-1$ subject to $d\delta = -\delta^2$. We describe weights $W$ and $W'$ such that the $\delta$-perturbation $A_\delta$ equals the $W$-weighted limit of this diagram and also the $W'$-weighted colimit of this diagram, thus an absolute limit.

Further, we define a double category associated with $\mcC$ whose underlying horizontal category is $\mcC$, whose vertical maps are the strong deformation retractions in $\mcC$ and whose squares are natural transformations between strong deformation retractions. This double category forms a natural framework for formulating basic building stones of the perturbation lemma as pushing or pulling vertical maps along certain horizontal maps. Concretely, for every strong deformation retraction $(f,g,h)$ and every perturbation $\delta$ of the source satisfying a natural nilpotency condition, there exists a map $\theta$ in the underlying $\gZMod$-enriched category (i.e.\ not respecting the differentials) such that the indicated filler
\[\xymatrix{
A \ar[r]^-\theta \ar[d]_-{(f,g,h)} & A_\delta \ar@{-->}[d]^-{(\widehat f, \widehat g, \widehat h)} \\
B \ar@{-->}[r] & B_{\delta'}
}\]
exists, in particular making up a new strong deformation retraction $(\widehat f, \widehat g, \widehat h)$ from $A_\delta$ to $B_{\delta'}$, i.e.\ a perturbation of the original strong deformation retraction $(f,g,h)$. We suppose that $\theta$ is not unique and this makes proving functoriality properties of this construction quite hard and tedious. In the last part, we prove compatibility with horizontal composition, vertical composition and tensor product.

\section{Notation}

Let $\mcA$ be an abelian category. We denote by $\gA$ the category of $\bbZ$-graded objects in $\mcA$, and by $\dgA$ the category of differential $\bbZ$-graded objects in $\mcA$, i.e.\ (unbounded) chain complexes in $\mcA$. In particular, we have the category of (unbounded) chain complexes of abelian groups, which we denote for simplicity by $\Ch = \dgZMod$. This is naturally a closed symmetric monoidal category that we will use as a basis for enrichment -- we will refer to $\Ch$-enriched categories as dg-categories; in particular, $\dgA$ is a dg-category.

We denote the differential of a chain complex generally as $d$. However, for $\Hom$-complexes we use $D$ to avoid confusion between $Df$, i.e.\ $D$ applied to $f$, and $df$, i.e.\ $d$ composed with $f$. The forgetful functor $\dgZMod \to \gZMod$ preserves tensor product and $\Hom$-complex strictly, i.e.\
\[(A \otimes B)_n = \bigoplus_{k+l=n} (A_k \otimes B_l),\quad \Hom(A, B)_n = \prod_{k+n=l} \Hom(A_k, B_l)\]
and these are equipped with differentials $d = d \otimes 1 + 1 \otimes d$ on the tensor product (using the Koszul sign convention\footnote{In general, all Koszul signs come from the symmetry $A \otimes B \xlra\cong B \otimes A$ given by $x \otimes y \mapsto (-1)^{|x| \cdot |y|} y \otimes x$.} $(f \otimes g)(x \otimes y) = (-1)^{|g| \cdot |x|} f x \otimes g y$) and $D = [d,-] = d_* - d^*$ on the $\Hom$-complex (i.e.\ the commutator, again using the Koszul sign convention $d_* f = d f$ and $d^* f = (-1)^{|d| \cdot |f|} f d$; in our case $|d| = -1$). In terms of the $\Hom$-complex, maps of the underlying ordinary category of a dg-category are elements of $Z_0 \Hom(A, B)$, i.e.\ $0$-cycles, while maps of the underlying ordinary category of a $\gZMod$-enriched category are elements of $\Hom(A,B)_0$, i.e.\ $0$-chains; the former will be called maps, or dg-maps for emphasis, while the latter will be called non-dg-maps. More generally, elements $f \in \Hom(A,B)_n$ will be called non-dg-maps of degree $n = |f|$. Accordingly, invertible elements will be called (dg-)isomorphisms and non-dg-isomorphisms.

The following compatibility relations hold in any $\gZMod$-enriched category for the same reason they hold in any graded associative algebra and its associated graded Lie algebra:
\begin{align*}
[\delta, f g] & = [\delta, f] g + (-1)^{|\delta|\cdot|f|} f [\delta, g] \\
[\delta, [f, g]] & = [[\delta, f], g] + (-1)^{|\delta|\cdot|f|} [f, [\delta, g]]
\end{align*}
whenever all necessary compositions are defined, e.g.\ when $\delta, f, g$ are non-dg-transformations from the identity. Specializing to $\delta = d$, we obtain the following compatibility relations between the differential of the $\Hom$-complex and the composition. These also hold in any dg-category, simply because the composition is required to be a chain map:
\begin{align*}
D(f g) & = D f \cdot g + (-1)^{|f|} f \cdot D g \\
D[f, g] & = [D f, g] + (-1)^{|f|} [f, D g]
\end{align*}
Finally, we list a number of properties that easily follow from the ones above, where we denote by $D_\ell \alpha = \alpha^{-1} \cdot D \alpha$ the left logarithmic derivative of $\alpha$, by $D_r \alpha = D \alpha \cdot \alpha^{-1}$ the right logarithmic derivative of $\alpha$,
and by $c_\alpha = \alpha_* \cdot (\alpha^{-1})^*$ the conjugation by $\alpha$; in all cases, $\alpha$ is required to be a non-dg-isomorphism, i.e.\ invertible, but not necessarily a dg-map:
\begin{align*}
D(\alpha^{-1}) & = -(-1)^{|\alpha|} \cdot \alpha^{-1} \cdot D\alpha \cdot \alpha^{-1} \\
D(D_\ell \alpha) & = -(-1)^{|\alpha|} \cdot (D_\ell \alpha)^2 \\
D(D_r \alpha) & = (D_r \alpha)^2 \\
D(f_*) & = (D f)_* \\
D(f^*) & = (D f)^* \\
D_\ell(c_\alpha) & = (-1)^{|\alpha|} \cdot [D_\ell \alpha, -] \\
D_r(c_\alpha) & = [D_r \alpha, -]
\end{align*}
The second property says that the left logarthmic derivative $\delta = D_\ell \alpha$ of any non-dg-isomorphism of degree $0$ satisfies the Maurer-Cartan equation $D\delta = -\delta^2$. We denote the set of all solutions as
\[\MC(A) \subseteq \mcC(A, A)_{-1}.\]

A simple corollary of these formulas is the following proposition.

\begin{proposition} \label{prop:transfer_diagram}
Let $F \colon \mcA \to \mcC$ be a dg-functor. Let $Ga$ be a collection of objects of $\mcC$ indexed by objects $a \in \mcA$ and $\alpha_a \colon Fa \to Ga$ an equally indexed collection of non-dg-isomorphisms (of degree 0). Then
\[G \colon \mcA(a', a) \xlra{F} \mcC(Fa', Fa) \xlra{c_\alpha} \mcD(Ga', Ga)\]
makes $G$ into a dg-functor if and only if $D_\ell \alpha$ commutes, in the graded sense, with all maps in the image of $F$. It is enough that the commutation holds for the images of some dg-generators of $\mcA$.
\end{proposition}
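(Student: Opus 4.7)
The plan is to observe that $G$ is automatically a $\gZMod$-enriched functor: conjugation $c_\alpha$ by a degree-zero non-dg-isomorphism is a graded linear isomorphism of hom-complexes (with inverse $c_{\alpha^{-1}}$) that preserves composition ($\alpha fg\alpha^{-1} = \alpha f \alpha^{-1} \cdot \alpha g \alpha^{-1}$) and sends identities to identities, so $G = c_\alpha \circ F$ inherits these properties from $F$. The only nontrivial question is therefore whether $G$ commutes with the differentials on hom-complexes, i.e.\ $D \circ G_{a',a} = G_{a',a} \circ D$.

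To measure this obstruction, I invoke the formula $D_\ell(c_\alpha) = (-1)^{|\alpha|}[D_\ell\alpha, -]$ from the list just above, specialised to $|\alpha|=0$. Unpacking the definition of the left logarithmic derivative, it reads
\[D(c_\alpha(\varphi)) = c_\alpha(D\varphi) + c_\alpha\bigl([D_\ell\alpha,\varphi]\bigr)\]
for any non-dg-map $\varphi$; one can also verify this directly by expanding $D(\alpha\varphi\alpha^{-1})$ via the Leibniz rule together with $D(\alpha^{-1}) = -\alpha^{-1}\,D\alpha\,\alpha^{-1}$. Setting $\varphi = F(f)$ and using dg-functoriality of $F$ gives
\[D(G(f)) - G(Df) \;=\; c_\alpha\bigl([D_\ell\alpha, F(f)]\bigr),\]
which, since $c_\alpha$ is an isomorphism, vanishes for every $f$ if and only if $D_\ell\alpha$ graded-commutes with every morphism in the image of $F$. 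This is the main equivalence.

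For the final reduction to dg-generators, I show that
\[S = \{\, f \in \mcA : [D_\ell\alpha, F(f)] = 0 \,\}\]
is a sub-dg-category of $\mcA$. It is a graded subgroup of each hom-complex by linearity of the commutator, it contains identities, and it is closed under composition by the Leibniz rule $[D_\ell\alpha, \psi\chi] = [D_\ell\alpha,\psi]\chi + (-1)^{|D_\ell\alpha||\psi|}\psi[D_\ell\alpha,\chi]$ combined with the fact that $F$ preserves composition. Closure under $D$ follows by applying $D$ to $[D_\ell\alpha, F(f)] = 0$ and using both $D(D_\ell\alpha) = -(D_\ell\alpha)^2$ and the derivation property of $D$ on commutators; the result rearranges to $[D_\ell\alpha, F(Df)] = -[(D_\ell\alpha)^2, F(f)]$, and a short Koszul-sign check shows that $(D_\ell\alpha)^2$ automatically graded-commutes with anything that $D_\ell\alpha$ already does. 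Hence if $S$ contains a set of dg-generators of $\mcA$, then $S = \mcA$. The whole argument is essentially bookkeeping; the only delicate step, and the one I expect to be the most error-prone, is this last verification that the square of an odd-degree element graded-commutes with each member of its own graded commutant.
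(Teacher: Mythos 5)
Your proposal is correct and follows essentially the same route as the paper: functoriality of $G$ is automatic, the dg-condition is identified with the vanishing of $D_\ell(c_\alpha)=[D_\ell\alpha,-]$ on the image of $F$, and the reduction to dg-generators comes from showing the graded commutant of $D_\ell\alpha$ in the image is closed under composition and under $D$, the latter via $D(D_\ell\alpha)=-(D_\ell\alpha)^2$ and the expansion of $[(D_\ell\alpha)^2,F(f)]$ in terms of $[D_\ell\alpha,F(f)]$. The "delicate" Koszul-sign step you flag is exactly the displayed computation in the paper's proof and checks out.
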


\begin{proof}
The association $G$ easily respects the composition and identity (no conditions used). It thus remains to show that the map $G$ in the statement is a dg-map. Since $F$ is a chain map by assumption, it is thus required that $c_\alpha$ is a chain map on the image of $F$ and this is exactly $D_\ell(c_\alpha) = [D_\ell \alpha, -] = 0$ on this image. The last point follows from the formulas above; the least trivial is the closure under the differential, i.e.\ given that $D_\ell \alpha$ commutes with $f$ in the image, it also commutes with $D f$:
\begin{align*}
[D_\ell \alpha, Df] & = -D\underbrace{[D_\ell \alpha, f]}_0 + [D D_\ell \alpha, f] = [-(D_\ell \alpha)^2, f] \\
& = -D_\ell \alpha \cdot \underbrace{[D_\ell \alpha, f]}_0 - (-1)^{|f|} \cdot \underbrace{[D_\ell \alpha, f]}_0 \cdot D_\ell \alpha = 0.\qedhere
\end{align*}
\end{proof}


\section{Perturbations}

The general definition of a perturbation is based on the situation in the dg-category $\Ch$, where each chain complex $A \in \Ch$ is equipped with a differential $d$ and it thus make sense to consider the same graded abelian group equipped with a differential $d+\delta$; we denote this chain complex by $A_\delta$. The identity map on the underlying graded abelian group is a non-dg-map $\varphi \colon A_\delta \to A$. Its universal property is expressed in terms of the postcomposition map $\varphi_* \colon \Ch(B, A_\delta) \to \Ch(B, A)$. The differential on the left hand side is
\[(d+\delta)_* - d^* = (d_* - d^*) + \delta_*,\]
which is equal to the differential on $\Ch(B, A)_{\delta_*}$; we thus obtain an isomorphism
\[\varphi_* \colon \Ch(B, A_\delta) \xlra\cong \Ch(B, A)_{\delta_*}.\]
In other words, $A_\delta$ is the representing object for $\Ch(-, A)_{\delta_*}$. The exact same definition makes sense in any dg-category, specifying $A_\delta$ only up to isomorphism (thus producing a notion that is not evil).

\begin{definition}
Let $\delta \in \mcC(A,A)_{-1}$. A non-dg-map $\varphi \colon A' \to A$ expresses $A'$ as a $\delta$-perturbation of $A$ if the induced map
\[\varphi_* \colon \mcC(B, A') \xlra\cong \mcC(B, A)_{\delta_*}.\]
is an isomorphism. In this case we denote $A'$ by $A_\delta$.
\end{definition}

Since $\varphi$ is a map of the underlying $\gZMod$-enriched category inducing isomorphism of the respective representable functors, it is invertible by Yoneda lemma, so $\varphi$ must be a non-dg-isomorphism.

\begin{proposition}
A non-dg-isomorphism $\varphi \colon A' \to A$ expresses $A'$ as a $\delta$-perturbation of $A$ if and only if $D_r \varphi = -\delta$.
\end{proposition}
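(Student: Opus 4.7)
The plan is to exploit that $\varphi$ is already a non-dg-isomorphism, so that $\varphi_* \colon \mcC(B, A') \to \mcC(B, A)$ is automatically a bijection of underlying graded abelian groups for every $B$. The only remaining condition for $\varphi$ to express $A'$ as a $\delta$-perturbation is then that $\varphi_*$ intertwine the differential on $\mcC(B, A')$ with the perturbed differential $D + \delta_*$ on $\mcC(B, A)_{\delta_*}$, so the whole content of the statement reduces to a single chain-map condition.

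I would rewrite this condition pointwise using the Leibniz rule from the preliminaries. For $\psi \in \mcC(B, A')$, since $|\varphi| = 0$, one has $D(\varphi\psi) = (D\varphi)\psi + \varphi \cdot D\psi$, hence
\[(D + \delta_*)(\varphi_* \psi) \;-\; \varphi_*(D\psi) \;=\; (D\varphi + \delta\varphi)\,\psi.\]
Consequently $\varphi_*$ is a chain map if and only if $(D\varphi + \delta\varphi)\,\psi = 0$ for every $B$ and every $\psi \in \mcC(B, A')$.

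The final step reduces this family of identities to a single equation. Specializing to $B = A'$ and $\psi = 1_{A'}$ yields $D\varphi + \delta\varphi = 0$; conversely, this single identity forces $(D\varphi + \delta\varphi)\,\psi = 0$ for all $\psi$ by right multiplication. Right-multiplying by the inverse $\varphi^{-1}$ rewrites $D\varphi + \delta\varphi = 0$ as $D_r\varphi = D\varphi \cdot \varphi^{-1} = -\delta$, which gives both implications at once. I do not foresee any substantial obstacle; the only point worth emphasizing is that the graded bijectivity of $\varphi_*$ is already built into the hypothesis, so that only differential compatibility must be checked, and the Yoneda-style reduction to the test case $\psi = 1_{A'}$ is what converts the condition on all $(B, \psi)$ into the single equation $D_r \varphi = -\delta$.
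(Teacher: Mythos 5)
Your argument is correct and is essentially the paper's own proof: the paper likewise reduces the chain-map condition for $\varphi_*$ to its value at the identity $1 \in \mcC(A',A')$ (your ``Yoneda-style reduction'' is what the paper calls naturality of $\varphi_*$), obtaining $(D+\delta_*)\varphi = 0$, i.e.\ $D_r\varphi = -\delta$. Your explicit Leibniz-rule computation just spells out the same step in slightly more detail.
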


\begin{proof}
By naturality of $\varphi_*$, it is enough to check the dg-condition for $\varphi_*$ at the identity $1 \in \mcC(A', A')$. Since $D1 = 0$, this is equivalent to $(D+\delta_*) \varphi = 0$ $\LRa$ $D \varphi = - \delta \varphi$ $\LRa$ $D_r \varphi = -\delta$.
\end{proof}

This means that the notion of a perturbation, if we allow $\delta$ to vary, is the same as the notion of a non-dg-isomorphism. The existence question, i.e.\ whether a $\delta$-perturbation exists for arbitrary $\delta$ will be treated shortly. Since $\delta = -D_r \varphi$ satisfies the Maurer-Cartan equation, we thus ask whether the natural map
\[-D_r \colon \mcC(-, A)_0^\times / \mathrm{iso} \lra \MC(A),\]
from non-dg-isomorphisms into $A$ up to dg-isomorphisms, is bijective (clearly, two non-dg-isomorphisms have the same right logarthmic derivative if and only if they are both $\delta$-perturbations for the same $\delta$ if and only if they are isomorphic).

Consider the inverse $\psi \colon A \to A_\delta$ of the non-dg-isomorphism $\varphi \colon A_\delta \to A$. Easily $D_\ell \psi = - D_r \varphi = \delta$ and the dual of the last proposition provides an equivalent condition that
\[\psi^* \colon \mcC(A_\delta, B) \xlra\cong \mcC(A, B)_{-\delta^*}\]
is an isomorphism, so that $\psi$ satisfies the dual definition of a $\delta$-perturbation. The notion of a perturbation is thus self-dual and we will use the same notation for the dual notion (instead of $A^\delta$ or something alike). In addition, $D_r \psi = c_\psi D_\ell \psi = c_\psi \delta$ and we may thus view $\psi$ as a perturbation $\varphi \colon (A_\delta)_{-c_\psi \delta} \to A_\delta$ in the previous sense. To avoid cumbersome notation, it is useful to pretend that $\varphi$ is identity, thus also $\psi$, making $A$ into a $(-\delta)$-perturbation of $A_\delta$. More generally, the composition
\[A_\delta \xlla\psi A \xlra\psi A_\varepsilon\]
is a non-dg-isomorphism with left logarithmic derivative $c_\psi(\varepsilon - \delta)$ and can thus be seen as an $(\varepsilon - \delta)$-perturbation: $A_\varepsilon = (A_\delta)_{\varepsilon - \delta}$.

We know that $\delta = D_\ell \psi$ satisfies the Maurer-Cartan equation $D \delta = - \delta^2$. Our main question now is whether any $\delta \in \MC(A)$ admits a perturbation, i.e.\ whether there exists a non-dg-isomorphism $\psi \colon A \to A'$ with $D_\ell \psi = \delta$ (a sort of integrability question).

An object of $\mcC$ equipped with a perturbation can be seen as a diagram in $\mcC$ parametrized by a one object dg-category, i.e.\ a dga -- it is generated by a single element $\delta$ of degree $-1$ subject to $d\delta = -\delta^2$; we denote it simply by $\bbZ[\delta]$ and keep the differential implicit.

\begin{proposition}
There exists a weight $W \in [\bbZ[\delta], \Ch]$ such that $A_\delta \cong \{W, A\}_\mcP$. There exists a weight $W' \in [\bbZ[\delta]^\op, \Ch]$ such that $A_\delta \cong W *_\mcP A$. Consequently, $\delta$-perturbation is an absolute limit.
\end{proposition}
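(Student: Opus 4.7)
The plan is to construct $W$ and $W'$ explicitly and to verify the defining universal properties by a direct chain-complex calculation; absoluteness will then follow because both representations involve Cauchy-dual weights.

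For the limit weight, I would take $W \in [\bbZ[\delta], \Ch]$ to be the free graded left $\bbZ[\delta]$-module on a generator $e$ of degree $0$, with differential determined by $d_W e = -\delta \cdot e$ and extended by the Leibniz rule. The identity $d\delta = -\delta^2$ in $\bbZ[\delta]$ makes $d_W^2 = 0$, so $W$ is a genuine dg-$\bbZ[\delta]$-module with $\delta_W$ acting as left multiplication by $\delta$. The key step is then to show that for any dg-$\bbZ[\delta]$-module $M = (M, \delta_M)$, evaluation at $e$ defines an isomorphism of chain complexes
\[[\bbZ[\delta], \Ch](W, M) \;\xlra\cong\; M_{\delta_M}.\]
On underlying graded groups this is immediate from freeness: the graded naturality constraint $\eta \circ \delta_W = (-1)^{|\eta|} \delta_M \circ \eta$ forces $\eta(\delta^k e) = (-1)^{k|\eta|} \delta_M^k \eta(e)$, so $\eta$ is determined by $\eta(e)$. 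For the differentials, combining $D\eta = d_M \eta - (-1)^{|\eta|}\eta d_W$ with $d_W e = -\delta e$ and the naturality constraint yields $(D\eta)(e) = (d_M + \delta_M)\eta(e)$. Specialising to $M = \mcC(B, A)$ with its induced $\bbZ[\delta]$-structure $\delta_*$ and invoking Yoneda then identifies $\{W, A\}_\mcP \cong A_\delta$.

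The colimit half is dual: take $W' \in [\bbZ[\delta]^\op, \Ch]$ to be the analogous free right dg-$\bbZ[\delta]$-module on a generator $e'$ of degree $0$ with $d_{W'} e' = -e' \cdot \delta$, and run the transposed argument, using the dual description $\psi^* \colon \mcC(A_\delta, B) \xlra\cong \mcC(A, B)_{-\delta^*}$ from earlier in the section. With $A_\delta$ represented both as $\{W, A\}_\mcP$ and as $W' *_\mcP A$ for the naturally dual pair of weights $(W, W')$, the weight $W$ is Cauchy (small-projective) in the standard sense, and hence $\delta$-perturbation is an absolute limit.

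The main obstacle I foresee is the sign bookkeeping in the differential calculation: the two $(-1)^{|\eta|}$ factors (from the Koszul rule in $D\eta$ and from the graded naturality constraint) must cancel to yield $+\delta_M$ on the nose, and this is precisely what forces the sign choice $d_W e = -\delta e$ rather than $+\delta e$. A secondary check is that $D$ preserves the subcomplex of graded-natural transformations; this is the one place where the Maurer--Cartan equation for $\delta_M$ enters the argument.
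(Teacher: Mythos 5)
Your construction of the limit weight coincides with the paper's: $W$ is the rank--one free left $\bbZ[\delta]$-module on a degree-$0$ generator with perturbed differential $d_W e = -\delta e$ (the paper writes this as $\bbZ[\delta]_{-\delta^*}$), and your verification by evaluating at the generator is the same Yoneda-plus-perturbation computation. The sign analysis for this half, including the observation that $(D\eta)(e) = (d_M + \delta_M)\eta(e)$ forces $d_W e = -\delta e$, is correct.

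The colimit half, however, has a genuine sign error: the object you call $W'$ is not a chain complex. With the right-module Leibniz rule and $d_{W'} e' = -e'\delta$ one gets $d_{W'}(e'\delta) = (d_{W'}e')\delta + e'(d\delta) = -e'\delta^2 - e'\delta^2$, hence $d_{W'}^2 e' = 2\,e'\delta^2 \neq 0$; and even at the level of formulas $W' \otimes_{\bbZ[\delta]} A$ would compute $A_{-\delta}$ rather than $A_\delta$ (note $-\delta$ need not satisfy the Maurer--Cartan equation). The correct choice is $d_{W'} e' = +e'\delta$, i.e.\ the paper's $W' = \bbZ[\delta]_{\delta_*}$. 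The asymmetry is forced by $|\delta|$ being odd together with $d\delta = -\delta^2$: on the left module, $d(\delta e) = (d\delta)e - \delta(d e)$ vanishes exactly for $de = -\delta e$, whereas on the right module, $d(e'\delta) = (de')\delta + e'(d\delta)$ vanishes exactly for $de' = +e'\delta$. So the two weights are not related by naively mirroring the sign; once this is corrected, your argument is the paper's.
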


\begin{proof}
We start with the situation in $\Ch$, so that objects with a perturbation are exactly left dg-$\bbZ[\delta]$-modules, and we will express the perturbed complex as a weighted limit, i.e.\ as $\Hom_{\bbZ[\delta]}(W, -)$ from some module $W$. We start with the ``Yoneda isomorphism'' given by the evaluation at $1 \in \bbZ[\delta]$,
\[\ev_1 \colon \Hom_{\bbZ[\delta]}(\bbZ[\delta], C) \to C\]
under which the inverse image of $\delta$ is $(\delta^*)^*$, the precomposition with the right multiplication by $\delta$, since
\[\ev_1 (\delta^*)^* \varphi = (\delta^*)^* \varphi 1 = (-1)^{|\varphi|} \cdot \varphi \delta^* 1 = (-1)^{|\varphi|} \cdot \varphi (\underbrace{1 \cdot \delta}_{\delta \cdot 1}) = \delta \varphi 1 = \delta \ev_1 \varphi.\]
Therefore, we get an induced isomorphism upon perturbation
\[\Hom_{\bbZ[\delta]}(\bbZ[\delta]_{-\delta^*}, C) \cong \Hom_{\bbZ[\delta]}(\bbZ[\delta], C)_{(\delta^*)^*} \xlra{\ev_1} C_\delta\]
We denote the perturbed module as $W = \bbZ[\delta]_{-\delta^*}$. We have $d1 = 0 - \delta^*1 = -\delta$ and the rest is determined by $W$ being a left dg-$\bbZ[\delta]$-module (which follows from $-\delta^*$ being a map of left modules).

Now we can transport this result easily to an arbitrary dg-category $\mcC$:
\[\mcC(B, A_\delta) \cong \mcC(B, A)_{\delta_*} \cong \Hom_{\bbZ[\delta]}(W, \mcC(B, A)) \cong \mcC(B, \{W, A\}_{\bbZ[\delta]}),\]
yielding that the $\delta$-perturbation $A_\delta$ is the $W$-weighted limit
\[A_\delta \cong \{W, A\}_{\bbZ[\delta]}.\]
Dually, $W' = \bbZ[\delta]_{\delta_*}$ is a right dg-$\bbZ[\delta]$-module (with $d1 = 0 + \delta_* 1 = \delta$) such that
\[A_\delta \cong W' *_{\bbZ[\delta]} A,\]
i.e.\ the $\delta$-perturbation $A_\delta$ is the $W'$-weighted colimit. Put together, $A_\delta$ is an absolute colimit.
\end{proof}



Since we will be interpreting a non-dg-map $\alpha \colon A \to B$ in many ways as a non-dg-map between various perturbations of $A$ and $B$, we denote these systematically as $\alpha^\delta_\varepsilon \colon A_\delta \to B_\varepsilon$. If $A$ or $B$ is left unperturbed, we leave out the corresponding index.

In particular, any non-dg-isomorphism $\alpha \colon A \to B$ can be decomposed as
\[\alpha \colon A \xlra{\alpha_\varepsilon} B_\varepsilon \xlra{\varphi} B,\]
where the first map is a dg-isomorphism for $\varepsilon = -D_r \alpha$. In this way, any such non-dg-isomorphism can be seen as a dg-isomorphism upon perturbing the codomain. Dually, it can be decomposed as
\[\alpha \colon A \xlra{\psi} A_\delta \xlra{\alpha^\delta} B,\]
where the second map is a dg-isomorphism for $\delta = D_\ell \alpha$.


\begin{remark}
The paper \cite{dg_cauchy} gives a characterization of absolute limits in dg-categories in terms of (de)suspensions, biproducts, mapping cones and ``cokernels of protosplit chain maps''. We believe that one should be able to replace this by (de)suspensions, biproducts, retracts and perturbations. In addition, the Cauchy completion of a dg-category should be obtainable by succesively closing the dg-category under these constructions.
\end{remark}

\section{Double categories of diagrams}

We will now introduce the double categorical setup, where the perturbation lemma naturally takes place. This kind of double categories was considered for different reasons in~\cite[Section 4.3]{AWFS1}.

We are interested in the following situation. Let $\mcA$ be a cocategory object in $\VCat$ such that $\mcA^0$ is the unit $\mcV$-category $I$, i.e.\ one consisting of one object $*$ and $\mcA^0(*,*) = I$.
Then for every $\mcC \in \VCat$ the functor $\mcV$-category $[\mcA, \mcC]$ is a category object in $\VCat$, i.e.\ a double category that is enriched over $\mcV$ in some directions. More precisely, we have 
\[[\mcA, \mcC]_0 = [\mcA^0, \mcC] \cong \mcC\]
so the underlying horizontal $\mcV$-category is just $\mcC$. Vertical morphisms from $c_1$ to $c_0$ form the set given by the pullback
\[\xymatrix{
[\mcA, \mcC](c_1, c_0) \ar[rr] \ar[d] \pb & & {*} \ar[d]^-{(c_1, c_0)} \\
\ob [\mcA^1, \mcC] \ar[r]^-{(d_1, d_0)} & \ob [\mcA^0, \mcC] \times \ob [\mcA^0, \mcC] \ar@{}[r]|-{\cong} & \ob \mcC \times \ob \mcC
}\]
i.e.\ the set of $\mcV$-functors $\mcA^1 \to \mcC$ that restrict to $c_1$ and $c_0$ via the maps $d_i = (d^i)^*$ (precomposition with $d^i \colon \mcA^0 \to \mcA^1$). The $\mcV$-object of squares from $f_1$ to $f_0$ is given by the $\mcV$-object of $\mcV$-natural transformations. The vertical source and target maps are given by restrictions to the respective objects $\mcA^0 \xra{d^i} \mcA^1$.

We organize the data in the following way, where we use $|||$ to suggest that the arrows in this direction form a $\mcV$-object. Of course, we mainly think of $\mcV = \Ch$, in which case the arrows in these directions are chain complexes.
\[\xymatrix@R=1ex{
c_1' \ar[dd]_-{f_1} \ar@/_1ex/[rr] \ar@/^1ex/[rr] \ar@{}[rr]|-{|||} & & c_0' \ar[dd]^-{f_0} \\
\phantom{|} \ar@/_1ex/@{=>}[rr] \ar@/^1ex/@{=>}[rr] \ar@{}[rr]|-{|||} & & \\
c_1 \ar@/_1ex/[rr] \ar@/^1ex/[rr] \ar@{}[rr]|-{|||} & \phantom{|} & c_0
}\]

\begin{example}
Here $\mcA$ is the free living arrow interpreted as a free $\mcV$-category. In this way, one obtains a double category of squares associated with every $\mcV$-category $\mcC$.
\end{example}

\begin{example}
See also \cite[Definition 19]{AWFS2}. Here $\mcS$ is the dg-category with two objects $s=d^1\mcS$ and $t=d^0\mcS$ and morphisms freely generated by $f \colon s \to t$, $g \colon t \to s$ of degree $0$ and $h \colon s \to s$ of degree $1$ subject to
\begin{align*}
Df & = 0, & Dg & = 0, & Dh & = 1 - gf, \\
0 & = 1 - fg, & fh & = 0, & hg & = 0, & hh & = 0.
\end{align*}
The cocomposition is a certain (obvious) functor $\mcS \to \mcS +_I \mcS$. In order to describe it, we write maps in the first copy of the coproduct as $f_2$ etc., maps in the second copy as $f_0$ etc.\ and maps in the image of the cocomposition as $f_1$ etc.; then the functor is given by formulas $f_1 = f_0f_2$, $g_1 = g_2g_0$ and $h_1 = h_2 + g_2h_0f_2$. This yields as $[\mcS, \mcC]$ the double category with objects and horizontal maps those of $\mcC$, with vertical maps strong deformation retracts in $\mcC$, and with squares dg-natural transformations of strong deformation retracts.
\end{example}

\begin{example}
Dropping the second line of identities gives another example $\mcR$, this time with vertical maps homotopy retracts. A homological perturbation theory for homotopy retracts was worked out by \cite{Cranic}.
\end{example}

\begin{example}
There are perturbed versions $\Sps$ and $\Spt$ (and their counterparts for $\mcR$) of the above examples that include a further generator $\delta$ at the source or target subject to the Maurer--Cartan equation $D\delta = -\delta^2$. However, we do not see a way of making these into cocategory objects so, in the above picture, vertical composition is not defined.

We will also make use of the localization $\Spsloc$ at $1 + \delta h$, i.e.\ in addition to $\Sps$ a further generator $(1 + \delta h)^{-1}$ is added and forced inverse to $1 + \delta h$; then $(1 + h \delta)^{-1} = 1 - h (1 + \delta h)^{-1} \delta$ as one can easily check, so $1 + h \delta$ is also invertible.
\end{example}

\section{Perturbation lemma}

We will now reinterpret Proposition~\ref{prop:transfer_diagram} for the concrete example of strong deformation retractions, i.e.\ for $\mcA = \mcS$. For transferring the diagram as in the proposition, we need two perturbations, $\delta$ of the source and $\delta'$ of the target, such that $\delta' f = f \delta$, $\delta g = g \delta'$ and $\delta h = - h \delta$. All three conditions are satisfied if $\delta = g \delta' f$; it then follows that $\delta' = f \delta g$ is uniquely determined. For short, we will say that \emph{$\delta$ factors through $f$ and $g$}. On the other hand, if $\delta'$ is an arbitrary perturbation, then so is $\delta = g \delta' f$ and so this concept is equivalent to a perturbation of the target. We have thus obtained the following:

\begin{proposition} \label{prop:push_along}
Given the solid part of the diagram
\[\xymatrix{
A \ar[r]^-\theta \ar[d]_-{(f,g,h)} & A' \ar@{-->}[d] \\
B \ar@{-->}[r]_-\eta & B'
}\]
with $\theta$ a non-dg-isomorphism for which $D_\ell \theta$ factors through $f$ and $g$ there exists a universal filler square.
\end{proposition}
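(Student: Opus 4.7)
The plan is to reduce this to Proposition~\ref{prop:transfer_diagram} applied to the dg-functor $F \colon \mcS \to \mcC$ encoding the strong deformation retraction $(f,g,h)$ (so $F(s) = A$, $F(t) = B$). First I would set $\delta = D_\ell \theta \in \MC(A)$, so that $\theta$ exhibits $A'$ as the $\delta$-perturbation $A_\delta$. The factorization hypothesis provides $\delta' \in \mcC(B,B)_{-1}$ with $\delta = g \delta' f$, and conjugating by $f, g$ using $fg = 1$ recovers $\delta' = f \delta g$; in particular $\delta'$ is unique.

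Next I would verify that $\delta' \in \MC(B)$ and that the three graded commutation relations needed for transfer hold. Since $Df = Dg = 0$, we have $D \delta' = f (D\delta) g = -f \delta^2 g$, while $(\delta')^2 = f\delta (gf) \delta g = f\delta^2 g - f\delta \cdot Dh \cdot \delta g$ by $gf = 1 - Dh$. The second term vanishes: the relations $fh = 0 = hg$ combined with $\delta = g\delta' f$ give $\delta h = 0 = h\delta$, whence $\delta \cdot Dh = D(\delta h) - D\delta \cdot h = \delta^2 h$, and so $f\delta \cdot Dh \cdot \delta g = f \delta^2 h \delta g = 0$. The commutation relations $\delta' f = f\delta$, $\delta g = g\delta'$, $\delta h = -h\delta$ follow directly from $\delta = g\delta' f$ and $fg = 1$, $fh = 0$, $hg = 0$.

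Finally, letting $\eta \colon B \to B_{\delta'}$ denote the structural non-dg-iso exhibiting $B_{\delta'}$ as a $\delta'$-perturbation (existing as the absolute limit constructed in the previous section), Proposition~\ref{prop:transfer_diagram} with $\mcA = \mcS$ and non-dg-isos $\alpha_s = \theta$, $\alpha_t = \eta$ produces a transferred dg-functor $c_\alpha F \colon \mcS \to \mcC$, whose underlying data is a strong deformation retraction $(\widehat f, \widehat g, \widehat h) \colon A_\delta \to B_{\delta'}$ with $\widehat f = \eta f \theta^{-1}$ and analogously for $\widehat g, \widehat h$. The filler square commutes since $\widehat f \cdot \theta = \eta \cdot f$ by definition, and universality follows because $\delta'$ is uniquely forced by the hypothesis and the transferred SDR is then determined by conjugation. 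The main obstacle I expect is the Maurer--Cartan verification for $\delta'$: it is short but requires a careful interplay of all the SDR identities and Koszul signs.
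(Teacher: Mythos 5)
Your proof is correct and takes essentially the same route as the paper, which likewise obtains the filler by specializing Proposition~\ref{prop:transfer_diagram} to $\mcA = \mcS$ and observing that the factorization $\delta = g\delta' f$ (with $\delta' = f\delta g$ uniquely determined) yields exactly the graded commutation relations $\delta' f = f\delta$, $\delta g = g\delta'$, $\delta h = -h\delta$ required for the transfer. Your explicit Maurer--Cartan check for $\delta'$ is a detail the paper leaves implicit (your sign in $\delta\cdot Dh$ should be $-\delta^2 h$, but the term dies against $h\delta = 0$ regardless; more directly, $(\delta')^2 = \delta'(fg)\delta' = (\delta' f)(g\delta') = f\delta^2 g = -D\delta'$).
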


If desired, we may replace $A'$ and $B'$ by the isomorphic $A_{D_\ell \theta}$ and $B_{D_\ell \eta}$ so that this indeed yields a perturbation. As a particular special case, this applies to any dg-isomorphism $\theta$, in which case we may take the bottom map to be the identity.

Our approach to the perturbation lemma starts with the following situation
\[\xymatrix{
A \ar[r]^-\psi \ar[d]_-{(f,g,h)} & A_\delta \ar@{-->}[d] \\
B \ar@{-->}[r] & B'
}\]
where $(f,g,h)$ is given, as well as $\delta$, and we prescribe the latter via the non-dg-isomorphism $\psi$. Since generally $\delta = D_\ell \psi$ does not factor through $f$ and $g$, a filler does not exist. However, we may replace $\psi$ by any other non-dg-isomorphism $\theta$ and then a filler may exist. Such a non-dg-isomorphism $\theta$ may be interpreted as an isomorphism $A_\delta \cong A_{\hat\delta}$ and this, in effect, replaces $\delta$ by $\hat\delta = D_\ell \theta$. We will now find a way of constructing $\theta$ so that $\hat\delta$ factors through $f$ and $g$. For convenience, we will do so in two steps, first dealing only with factorization through $f$, but preserving any existing factorization through $g$, the second part is then formally dual and it remains to put the two parts together.

\begin{theorem}[Perturbation lemma]
Given a strong deformation retraction $A \to B$ and a perturbation $\delta$ of $A$, for which $1 + \delta h$ is invertible,
\[\xymatrix{
A \ar@<.5ex>[r]^-\psi \ar@<-.5ex>@{-->}[r]_-{\theta} \ar[d]_-{(f,g,h)} & A_\delta \ar@{-->}[d]^-{(\widehat f,\widehat g,\widehat h)} \\
B \ar@{-->}[r] & B_{\delta'}
}\]
there exists a non-dg-isomorphism $\theta \colon A \to A_\delta$ such that $D_\ell \theta$ factors through $f$ and $g$; consequently, a filler as above exists. There are explicit formulas
\[\widehat f = f (1 + \delta h)^{-1}, \quad \widehat g = (1 + h \delta)^{-1} g, \quad \widehat h = h (1 + \delta h)^{-1} = (1 + h \delta)^{-1} h\]
and $\delta' = \widehat f \delta g = f \delta \widehat g$. (To be perfectively precise, all these are maps between $A$ and $B$ and these are to be transfered using $\psi$ to maps between $A_\delta$ and $B_{\delta'}$.)
\end{theorem}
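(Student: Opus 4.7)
The plan is to reduce to Proposition~\ref{prop:push_along}: it is enough to produce a non-dg-isomorphism $\theta\colon A \to A_\delta$ whose left logarithmic derivative $\widehat\delta := D_\ell\theta$ factors as $\widehat\delta = g\delta' f$ through $f$ and $g$. Once such $\theta$ is found, the filler $\eta\colon B\to B_{\delta'}$ exists, and the perturbed retraction is then obtained from the transfer-by-conjugation of Proposition~\ref{prop:transfer_diagram} as $\widehat f = \eta f \theta^{-1}$, $\widehat g = \theta g \eta^{-1}$, $\widehat h = \theta h \theta^{-1}$; in the underlying graded category, choosing $\psi$ and $\eta$ as the canonical identities there simplifies these to $\widehat f = f \theta^{-1}$, $\widehat g = \theta g$, $\widehat h = \theta h \theta^{-1}$.

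I would construct $\theta$ in the two halves hinted at before the theorem. For the first half, set $\theta_1 := \psi \cdot (1+\delta h)$, which is a non-dg-isomorphism by the standing hypothesis on $1+\delta h$. Applying the identity $D_\ell(\psi u) = u^{-1}\delta u + D_\ell u$ with $u = 1+\delta h$, and using $D\delta = -\delta^2$ together with $Dh = 1-gf$, causes massive cancellation: the $\delta^2 h$ coming from $u^{-1}\delta u$ is killed by the $-\delta^2 h$ from $u^{-1}Du$, and the stray $\delta$'s cancel as well, leaving
\[D_\ell \theta_1 = (1+\delta h)^{-1}\,\delta\, g f.\]
This manifestly factors through $f$ on the right, and the identity $(1+\delta h)^{-1}g = g$ (which holds because $hg = 0$ truncates the geometric series $\sum_n(-\delta h)^n g$ to its constant term) shows that any preexisting factorization through $g$ survives, which is the sense in which step~1 is dual-compatible with step~2.

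For the second half, write $\widehat\delta_1 := (1+\delta h)^{-1}\delta g f$ and put $\theta := \theta_1 \cdot (1 - h\widehat\delta_1)$. The correction is unipotent (because $\widehat\delta_1 h = 0$, by $fh = 0$), hence invertible with inverse $1 + h\widehat\delta_1$, so $\theta$ is again a non-dg-isomorphism. Expanding $D_\ell(\theta_1 w) = w^{-1}\widehat\delta_1 w + D_\ell w$ for $w = 1 - h\widehat\delta_1$, using the Maurer--Cartan equation $D\widehat\delta_1 = -\widehat\delta_1^{\,2}$ for $\widehat\delta_1$ together with $\widehat\delta_1 h = 0$ and $Dh = 1-gf$, nearly everything cancels and one is left with
\[D_\ell \theta = g f \widehat\delta_1 - h\widehat\delta_1(1-gf)\widehat\delta_1.\]
The error term vanishes upon the associative regrouping $h\cdot\bigl(\widehat\delta_1(1-gf)\bigr)\cdot\widehat\delta_1 = 0$, since $\widehat\delta_1(1-gf) = (1+\delta h)^{-1}\delta(gf - gfgf) = 0$ using $fg = 1$. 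Therefore $\widehat\delta = gf(1+\delta h)^{-1}\delta gf = g\cdot\bigl(f(1+\delta h)^{-1}\delta g\bigr)\cdot f$ has the required factorization with $\delta' = f(1+\delta h)^{-1}\delta g$.

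With this $\theta$ in hand, the stated formulas for $\widehat f,\widehat g,\widehat h$ drop out of the graded-category computations of $f\theta^{-1}$, $\theta g$ and $\theta h\theta^{-1}$ using $fh = 0$, $hg = 0$, $h^2 = 0$ and the identity $h(1+\delta h)^{-1} = (1+h\delta)^{-1}h$ recorded in the $\Spsloc$ example; the two presentations of $\delta'$ then agree via the commutation $(1+\delta h)^{-1}\delta = \delta(1+h\delta)^{-1}$. The main obstacle will be step~2: repairing the left factorization through $g$ without disturbing the right factorization through $f$ requires an exact unipotent adjustment, and the telescope only closes because after step~1 the intermediate $\widehat\delta_1$ happens to satisfy not only $\widehat\delta_1 h = 0$ but also $\widehat\delta_1(1-gf) = 0$, the latter being forced by the right-factorization together with $fg = 1$.
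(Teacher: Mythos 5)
Your proposal is correct and is essentially the paper's own argument: the same two-step decomposition $\theta=\psi\,(1+\delta h)\,(1-h\widehat\delta_1)$ (the paper's $\psi\alpha\beta^{-1}$ with $\alpha=1+\delta h$, $\beta=1+h\alpha^{-1}\delta gf$), the same intermediate perturbation $\widehat\delta_1=(1+\delta h)^{-1}\delta gf$, and the same final conjugation formulas. The only difference is presentational -- you verify the two correction factors by direct computation of left logarithmic derivatives, whereas the paper derives them (and their uniqueness under $\zeta g=0$, $\alpha h=h$) from the condition $(D+\delta_*)(\alpha-gf)=0$ -- so no further comment is needed.
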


\begin{proof}
As explained, our first task is to find $\alpha_\delta \colon A \to A_\delta$, associated with a non-dg-automorphism $\alpha \colon A \to A$, satisfying two properties
\begin{itemize}
\item
	$\alpha g = g$ and
\item
	$D_\ell \alpha_\delta$ factors through $f$.
\end{itemize}
Using $D_\ell \alpha_\delta = D_\ell (\psi \alpha) = D_\ell \alpha + \alpha^{-1} \delta \alpha = \alpha^{-1} \cdot (D + \delta_*) \alpha$, the first condition and $D(1-gf)=0$, we can rewrite the second condition as
\begin{align*}
0 & = \alpha (D_\ell \alpha_\delta) (1-gf) = ((D+\delta_*)\alpha)(1-gf) \\
& = (D+\delta_*)(\alpha(1-gf)) =(D+\delta_*)(\alpha-gf).
\end{align*}
This shows that, assuming the first condition, the second condition is equivalent to $\alpha-gf$ being a cycle in $\mcC(A, A)_{\delta_*}$; a universal solution then has to be a cycle in $\mcS_\mathsf{ps}(s,s)_{\delta_*}$. Therefore, it is sufficient that $\alpha-gf$ be a boundary annihilated by $g$ from the right,
\[\alpha-gf=(D+\delta_*)\zeta\]
with $\zeta$ any non-dg-map of degree $1$ satisfying $\zeta g = 0$.\footnote{In fact, $\mcS_\mathsf{ps}(s,s)_{\delta_*}$ is acyclic, as well as its subcomplex formed by the left annihilators of $g$, so this condition is also necessary.} The simplest non-trivial case $\zeta=h$ yields
\[\alpha = gf + (D + \delta_*)h = gf + (1-gf) + \delta h = 1 + \delta h\]
(the absolutely simplest $\zeta =0$ would give $\alpha = gf$, which is only invertible for the trivial strong deformation retraction, so this is not much of a use). Returning to the equivalent form of the second condition, it implies easily that $(D+\delta_*)\alpha = \delta gf$ and finally $D_\ell \alpha_\delta = \alpha^{-1}\delta gf$. Therefore, if $\delta$ factors through $g$ then $D_\ell \alpha_\delta$ also factors through $g$ (since $\alpha^{-1} g = g$).

Imposing further that $\alpha h = h$, the above solution is unique: Since $\mcS_\mathsf{ps}(s,s)_1$ is generated by the compositions $\zeta = h \delta \dots \delta h$, we get easily that $\alpha = g f + (D + \delta_*) \zeta$ satisfies $\alpha h = \zeta$, hence we must have $\zeta = h$.

Dually, we find $\beta$ satisfying $f \beta = f$ and that $-D_r \beta^\delta$ factors through $g$. The simplest non-trivial solution (unique if we impose $h \beta = h$) is
\[\beta = gf + (D - \delta^*)h = gf + (1 - gf) + h \delta = 1 + h \delta\]
with $-D_r \beta^\delta = g f \delta \beta^{-1}$. If $\delta$ factors through $f$ then so does $-D_r \beta^\delta$.

It remains to combine the two steps. First we utilize the map $\alpha$ as on the left and further replace $\alpha_\delta$ by the composition of $\psi$ and $\alpha_\delta^{\delta_1}$ as on the right:
\[\xymatrix{
A \ar@<.5ex>[r]^-\psi \ar@<-.5ex>[r]_-{\alpha_\delta} \ar[d]_-{(f,g,h)} & A_\delta \ar@{-->}[d] & & A \ar[r]^-{\psi} \ar[d]_-{(f,g,h)} & A_{\delta_1} \ar[r]^-{\alpha^{\delta_1}_\delta}_-\cong \ar@{-->}[d] & A_{\delta} \ar@{-->}[d] \\
B \ar@{-->}[r] & B' & & B \ar@{-->}[r] & B' \ar[r]_-1^-\cong & B'
}\]
We obtain $\delta_1 = D_\ell \alpha_\delta = \alpha^{-1} \delta g f$ which factors through $f$. We may now reverse $\psi \colon A \to A_{\delta_1}$ and apply to it the dual argument, i.e.\ we utilize the map $\beta$ (we will see below that it is indeed invertible):
\[\xymatrix{
A \ar[d]_-{(f,g,h)} & A_{\delta_1} \ar@<-.5ex>[l]_-{\varphi} \ar@<.5ex>[l]^-{\beta^{\delta_1}} \ar[r]^-{\alpha^{\delta_1}_\delta}_-\cong \ar@{-->}[d] & A_{\delta} \ar@{-->}[d] & & A \ar[d]_-{(f,g,h)} & A_{\delta_2} \ar[l]_-{\varphi} \ar@{-->}[d] & A_{\delta_1} \ar[l]_-{\beta^{\delta_1}_{\delta_2}}^-\cong \ar[r]^-{\alpha^{\delta_1}_\delta}_-\cong \ar@{-->}[d] & A_{\delta} \ar@{-->}[d] \\
B & B' \ar@{-->}[l] \ar[r]_-1^-\cong & B' & & B & B_{\delta'} \ar@{-->}[l]^-\varphi & B_{\delta'} \ar[l]^-1_-\cong \ar[r]_-1^-\cong & B_{\delta'}
}\]
We obtain $\delta_2 = -D_r \beta^{\delta_1} = g f \delta_1 \beta^{-1} = g f \alpha^{-1} \delta g f$ (since $f \beta^{-1} = f$), which factors through $f$and $g$ and we may thus apply the above proposition. In addition, the formula for $\delta_2$ yields the formula $\delta' = f \alpha^{-1} \delta g$.

It remains to compute the new strong deformation retraction $(\widehat f, \widehat g, \widehat h)$. Pretending that $\varphi$ is the identity, all components are obtained from $(f,g,h)$ by composing with $\beta \alpha^{-1}$ on the right and/or $\alpha \beta^{-1}$ on the left. Thus, we are left to show the following:
\begin{align*}
\widehat f & = f \beta \alpha^{-1} = f \alpha^{-1} \\
\widehat g & = \alpha \beta^{-1} g = \beta^{-1} g \\
\widehat h & = \alpha \beta^{-1} h \beta \alpha^{-1} = h \alpha^{-1}
\end{align*}
Given that $\beta = 1 + h \alpha^{-1} \delta g f$, it is clear that $\beta^{-1} = 1 - h \alpha^{-1} \delta g f$ so that $\beta$ is indeed invertible, and also that $\beta h = h = h \beta$; we also have $\alpha h = h$. Finally, $\beta^{-1} g = (1 - h \alpha^{-1} \delta) g = (1 + h \delta)^{-1} g$ and this is annihilated by $h$ from the left so that $\alpha \beta^{-1} g = (1 + \delta h) (1 + h \delta)^{-1} g = (1 + h \delta)^{-1} g$.
\end{proof}


\begin{remark}
The resulting map $\theta \colon A \to A_\delta$ can be written more conveniently as
\[A \xlla{\beta} A \xlra{\alpha} A \xlra{\psi} A_\delta\]
though with individual ingredients not satisfying the assumptions of Proposition~\ref{prop:push_along}. This means that the new strong deformation retraction $(\widehat f, \widehat g, \widehat h)$ is obtained by pushing along $\alpha \beta^{-1}$ and then perturbing the differential, as in the fromulas from the perturbation lemma. We could have organized the proof similarly, by using the dual decomposition in the two steps, thus obtaining a potentially different map $\theta$, given by
\[A \xlra{\alpha} A \xlla{\beta_1} A \xlra{\psi} A_\delta\]
with $\beta_1$ obtained almost as above but from $\varphi \colon A_\delta \to A_{\delta'_1}$ (this would be a mild inconvenience since we would need to use a strong deformation retraction different from the original $(f,g,h)$), but it turns out that the resulting map $\theta$ would be the same.

On the other hand, we could run the whole argument dually, starting with the step $\beta$, followed by the step $\alpha$. Then the new strong deformation retraction would be obtained by pulling along the composition
\[A_\delta \xlra{\varphi} A \xlra{\overline\beta} A \xlla{\overline\alpha} A\]
(with slightly different maps $\alpha$ and $\beta$ since they are applied in the opposite order). It seems that this map is not inverse to $\theta$ above, thus yielding a different way of obtaining the perturbation lemma. However, the resulting strong deformation retraction is the same, as one can observe by looking at the formulas for $(\widehat f, \widehat g, \widehat h)$ and $\delta'$ that are self-dual. Thus, we do not expect the perturbation lemma to have any strong universal property, even though the two ingredients $\alpha$ and $\beta$ are unique.

In more detail, the composition $\alpha \beta^{-1}$ can be computed as
\[(1 + \delta h)(1 + \widehat h \delta g f)^{-1} = (1 + \delta h)(1 - \widehat h \delta g f) = 1 + \delta h - \widehat h \delta g f.\]
The dual composition $\overline\alpha^{-1} \overline\beta$ can be computed as
\[(1 + g f \delta \widehat h)^{-1}(1 + h \delta) = (1 - g f \delta \widehat h)(1 + h \delta) = 1 + h \delta - g f \delta \widehat h.\]
The composition of these two maps in the simpler direction $\alpha \beta^{-1} \overline\alpha^{-1} \overline\beta$ is
\[1 + \delta h + h \delta - g f + \widehat g \widehat f = [d + \delta, h] + \widehat g \widehat f\]
and it seems very unlikely that this equals $1$ (this would mean that $h$ is a homotopy between $\widehat g \widehat f$ and $1$ on $A_\delta$; given that $\widehat h$ is such a homotopy, we find it unlikely that $h$ would be one, too).
\end{remark}

\section{Functoriality of the perturbation lemma}

In its simplest form, we may view the perturbation lemma as an association
\[[\Spsloc, \mcC] \to [\mcS, \mcC]\]
for any dg-category $\mcC$ closed under perturbations.

\begin{theorem}
The above association is a dg-functor, which preserves all limits and colimits.
\end{theorem}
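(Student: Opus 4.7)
The plan is to realize the association as a composite of two dg-functorial operations, each manifestly a dg-functor preserving all limits and colimits.

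For the first, observe that the assignment
\[\Phi = (A, B, f, g, h, \delta, (1+\delta h)^{-1}) \mapsto (A_\delta, B_{\delta'}),\]
with $\delta' = f \delta \widehat g$ built from the data, is dg-functorial. Indeed, by the proposition on weighted (co)limits, $A_\delta$ is the weighted limit $\{W,A\}_{\bbZ[\delta]}$ and equally the colimit $W' *_{\bbZ[\delta]} A$ by fixed weights, depending dg-functorially on $(A, \delta)$; an analogous statement holds for $B_{\delta'}$ (which depends dg-functorially on the whole of $\Phi$ through $\delta'$). Because these are \emph{absolute} limits, the assignment preserves limits (as a weighted limit with respect to a fixed weight) and colimits (since absolute limits are preserved by every functor), computed pointwise in $\mcC$.

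For the second, the formulas $\widehat f = f(1+\delta h)^{-1}$, $\widehat g = (1+h\delta)^{-1}g$, $\widehat h = h(1+\delta h)^{-1}$, transported along the non-dg-isomorphisms $\psi_A \colon A \to A_\delta$ and $\psi_B \colon B \to B_{\delta'}$ into honest dg-maps between the perturbed objects, assemble into an $\mcS$-diagram $(A_\delta, B_{\delta'}, \widehat f, \widehat g, \widehat h)$ in $\mcC$. That these verify the defining relations of $\mcS$ (the SDR axioms together with the dg-conditions) is precisely what the perturbation lemma proves. The assembly is itself dg-functorial in $\Phi$ — being built from composition, inversion of $1+\delta h$ and conjugation by the non-dg-isomorphisms $\psi_A, \psi_B$, each of which respects the dg-structure — and preserves limits and colimits since $[\mcS, \mcC]$ has pointwise (co)limits computed in $\mcC$. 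Combining the two steps yields the theorem.

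The main obstacle will be to check dg-naturality cleanly: one must verify that a dg-natural transformation $\eta \colon \Phi \to \Phi'$ — which in particular commutes with $\delta$ and $(1+\delta h)^{-1}$ — induces dg-maps between the corresponding perturbed objects that are compatible with the $\widehat{}$-formulas on the nose. This amounts to mechanical chain-level bookkeeping of how the formulas transform under conjugation by $\psi$, together with careful tracking of the Koszul signs. The essential conceptual inputs, namely absoluteness of the perturbation weight and the perturbation lemma itself, are already in hand from the preceding sections, so once the bookkeeping is done the two claims of the theorem follow formally.
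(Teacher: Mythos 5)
Your two ingredients --- the weighted-(co)limit description of $A_\delta$ with its absoluteness, and the explicit formulas $(\widehat f,\widehat g,\widehat h)$ validated by the perturbation lemma --- are the right ones, but the step you defer as ``mechanical chain-level bookkeeping'' is in fact the entire content of the theorem, and the reason you offer for why it will go through is not sound. You claim the assembly is dg-functorial because it is built from composition, inversion of $1+\delta h$ and ``conjugation by the non-dg-isomorphisms $\psi_A,\psi_B$, each of which respects the dg-structure.'' But conjugation by a non-dg-isomorphism does \emph{not} respect the dg-structure: by the identity $D_r(c_\alpha)=[D_r\alpha,-]$ (equivalently Proposition~\ref{prop:transfer_diagram}), $c_\psi$ is a chain map only on morphisms that commute with $\delta$, which the components of a general dg-natural transformation $\Phi\to\Phi'$ of $\Spsloc$-diagrams have no reason to do. So the facts that the induced maps between perturbed objects are dg, and that the naturality squares against $\widehat f,\widehat g,\widehat h$ commute on the nose, both genuinely need an argument, and nothing in your two steps supplies it. (Your first step also only defines the association on objects of $[\Spsloc,\mcC]$; the morphism part, which is exactly where dg-naturality lives, is never constructed.)

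The paper's device for discharging all of this at once is to work at the level of indexing dg-categories: one writes down a single dg-functor $P\colon \mcS\otimes\bbZ[\delta]\to Q\Spsloc$ (where $Q$ freely closes under perturbations) sending $f\otimes 1$, $g\otimes 1$, $h\otimes 1$ to $\widehat f,\widehat g,\widehat h$ and $1_s\otimes\delta$, $1_t\otimes\delta$ to $\delta,\delta'$; that $P$ is well defined, i.e.\ respects the relations of $\mcS\otimes\bbZ[\delta]$, is precisely what the perturbation lemma proves. The whole association is then the composite
\[ [\Spsloc,\mcC]\cong[Q\Spsloc,\mcC]\to[\mcS\otimes\bbZ[\delta],\mcC]\cong[\mcS,[\bbZ[\delta],\mcC]]\to[\mcS,\mcC], \]
with the first arrow $P^*$ and the last the weighted limit $\{W,-\}_{\bbZ[\delta]}$ applied pointwise. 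Both are dg-functors for purely formal reasons, and both preserve limits; absoluteness of the weight (your correct observation) gives preservation of colimits as well. If you want to salvage your outline, this is the missing organizational idea: replace the by-hand verification of dg-naturality with the construction of $P$, after which there is nothing left to check.
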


\begin{proof}
Let $Q$ denote the operator of the free closure under perturbations and consider the dg-functor
\[P \colon \mcS \otimes \bbZ[\delta] \to Q\Spsloc\]
given on objects by $s \mapsto s$ and $t \mapsto t$ and on generating morphisms $f \otimes 1_*$, $g \otimes 1_*$, $h \otimes 1_*$ coming from $\mcS$ by the formulas for $(\widehat f, \widehat g, \widehat h)$; we know that these form a strong deformation retraction.\footnote{One may understand the proof of the perturbation lemma in $Q\Spsloc$, in which case it produces exactly the required dg-functor.} The remaining generators $1_s \otimes \delta$ and $1_t \otimes \delta$ are sent to $\delta$ and $\delta'$. This dg-functor then induces
\[[\Spsloc, \mcC] \cong [Q\Spsloc, \mcC] \xlra{P^*} [\mcS \otimes \bbZ[\delta], \mcC] \cong [\mcS, [\bbZ[\delta], \mcC]] \xlra{\{W, -\}_{\bbZ[\delta]*}} [\mcS, \mcC]\]
and this is clearly a dg-functor. Since $P^*$ and the weighted limit both preserve limits, the same is true for the composition. The colimit statement comes from the same argument but with the perturbation interpreted as a weighted colimit.
\end{proof}

In our double-categorical picture, the above theorem only concerns the horizontal composition, since the source is not equipped with a vertical composition. The compatibility with vertical is thus more subtle and will be our next concern. After that, we will deal with iterating the perturbation lemma and with its compatibility with the tensor structure.

In order to express the compatibility with the vertical composition, we interpret the perturbation lemma as an association:
\[\xymatrix{
A \ar[r]^-\psi \ar[d]_-{(f,g,h)} & A_\delta & & {} \ar@{}[d]|-{\displaystyle\longmapsto} & & A \ar[r]^-{\theta} \ar[d]_-{(f,g,h)} & A_\delta \ar[d]^-{(f_1,g_1,h_1)} \\
B & & & & & B \ar[r]_-\psi & B_{\delta'}
}\]
Now consider the following situation
\[\xymatrix{
A \ar[r]^-\psi \ar[d]_-{(f,g,h)} & A_\delta & A \ar[r]^-{\theta} \ar[d] & A_\delta \ar[d] & A \ar[r]^-{\theta} \ar[d] & A_\delta \ar[dd] \\
C \ar[d]_-{(f',g',h')} & & C \ar[d] \ar@<.5ex>[r]^-{\psi} \ar@<-.5ex>[r]_-{\eta} & C_{\delta'} \ar[d] & C \ar[d] & \\
B & & B \ar[r]_-{\psi} & B_{\delta''} & B \ar[r]_-{\psi} & B_{\hat\delta}
}\]
with two composable strong deformation retractions and a perturbation of the source. We may either apply the perturbation lemma to the top square, thus obtain $\psi \colon C \to C_{\delta'}$ and then apply the perturbation lemma to it; or we can apply the perturbation lemma directly to the composite strong deformation retraction. Note that the two maps $\theta$ at the top are different -- they are induced by the same perturbation of $A$ but with respect to different strong deformation retractions: $A \to C$ in the first case and the composite $A \to B$ in the second case. One can thus only hope that the resulting strong deformation retractions $A_\delta \to B_{\delta''}$ are equal (one cannot even compose the squares in the first case).

\begin{theorem}
The two strong deformation retractions $A_\delta \to B_{\delta''}$ are equal.
\end{theorem}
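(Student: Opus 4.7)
The plan is to verify the theorem by direct computation, reducing the four required equalities to two well-known algebraic identities. Writing $(f,g,h)\colon A\to C$ and $(f',g',h')\colon C\to B$ for the two composable SDRs, their vertical composite is $(f'f,\,gg',\,h+gh'f)$ by the cocomposition formulas $f_1=f_0f_2$, $g_1=g_2g_0$, $h_1=h_2+g_2h_0f_2$ recorded earlier. For brevity, abbreviate $u=(1+\delta h)^{-1}$, $u'=(1+h\delta)^{-1}$, $v=(1+\delta'h')^{-1}$, $v'=(1+h'\delta')^{-1}$, and $w=(1+\delta(h+gh'f))^{-1}$. I would then set out to check componentwise that $\hat f=\widehat{f'}\widehat f$, $\hat g=\widehat g\,\widehat{g'}$, $\hat h=\widehat h+\widehat g\,\widehat{h'}\,\widehat f$, and $\hat\delta=\delta''$, where the hatted quantities are those produced by route~(b) and the double-hatted composites come from route~(a).

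The linchpin will be the \emph{push-through} identity $Y(1+XY)^{-1}=(1+YX)^{-1}Y$, valid whenever the inverses exist. Expanding $w^{-1}=(1+\delta h)+\delta gh'f$ and left-multiplying by $u$ gives $w=(1+u\delta gh'f)^{-1}u$; push-through with $X=u\delta g$, $Y=f$, combined with the formula $\delta'=fu\delta g$ from the perturbation lemma, yields the key intertwining
\[fw=vfu.\]
From this both $\hat f=f'fw=f'vfu=\widehat{f'}\widehat f$ and $\hat\delta=f'fw\delta gg'=f'v\delta'g'=\delta''$ follow in a single line. The dual intertwining $w'g=u'gv'$ gives $\hat g=\widehat g\,\widehat{g'}$ by the same argument.

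The homotopy equality is the only step where care is required. Using $(1+X)^{-1}=1-X(1+X)^{-1}$ one obtains $w=u-u\delta g\cdot h'vfu$, so $\hat h=(h+gh'f)w$ expands as $hu-hu\delta g\,h'vfu+gh'vfu$, whereas $\widehat h+\widehat g\,\widehat{h'}\,\widehat f$ reads $hu+u'gh'vfu$. Since $u'-1=-u'h\delta$, the difference of the two expressions boils down to the identity $hu\delta=u'h\delta$, which is nothing but the standard two-form equality $h(1+\delta h)^{-1}=(1+h\delta)^{-1}h$ already underlying the two equivalent expressions for $\widehat h$ in the statement of the perturbation lemma, and is immediate from the Neumann series.

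The main obstacle is purely organizational: one must carefully track the order of compositions, the left-versus-right form of each inverse, and the cocomposition formula for $h$. No conceptual subtlety intervenes, but the check cannot be bypassed by appealing to the horizontal dg-functoriality theorem proved above, since the source double category $\Spsloc$ carries no vertical composition — this is precisely why a direct term-by-term verification is needed.
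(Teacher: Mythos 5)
Your computation is correct: I checked the expansion $w=(1+u\delta gh'f)^{-1}u$, the push-through consequences $fw=vfu$ and $w'g=u'gv'$, the resulting identities for $\hat f$, $\hat g$, $\hat\delta$, and the homotopy comparison, where the residual term $(-hu\delta+(1-u'))gh'vfu=(u'h-hu)\delta\,gh'vfu$ indeed vanishes by $hu=u'h$. (One small point worth stating explicitly: since both routes perturb the source by the same $\delta$ and you verify $\hat\delta=\delta''$ on the target, comparing the three underlying non-dg components really does suffice; also, invertibility of $1+\delta(h+gh'f)$ follows from that of $1+\delta h$ and $1+\delta'h'$ via the same push-through identity, so the hypotheses of the two routes match.) Your route is genuinely different in organization from the paper's: the paper argues double-categorically, observing that the $f$- and $h$-components only involve pushing along the $\alpha$-maps, then verifying commutativity of squares such as $f_1(1+\delta(h+gh'f))=(1+\delta'h')f$ --- which is exactly your identity $fw=vfu$ in disguise --- and handling the homotopy components by a chain of pushouts exploiting $h'h'=0$ together with the relation $h_2=h_1-(g-g_1)h'f_2$. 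What the paper's approach buys is conceptual economy (the $f$- and $g$-cases are literally dual, and the square-difference check is one line); what your approach buys is a self-contained, verifiable formula-level proof that does not require setting up the pushout diagrams, at the cost of the bookkeeping you acknowledge. Both proofs ultimately rest on the same two identities, the push-through lemma and $h(1+\delta h)^{-1}=(1+h\delta)^{-1}h$.
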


\begin{proof}
We denote the perturbed sdr's as $(f_1, g_1, h_1)$ and $(f_1', g_1', h_1')$ and the composite one as $((f'f)_1, (gg')_1, (h+gh'f)_1)$. In order to compare the resulting components $f$ and $h$, it is enough to push along the map $\alpha$ (and thus ignore $\beta$). For the composite sdr, this means pushing along $1+\delta(h+gh'f)$ on the source and $1$ on the target which we can accompany by pushing along $1+\delta' h'$ on the middle object yielding
\[\xymatrix@C=4pc{
A \ar[r]^-{1+\delta(h+gh'f)} \ar[d]_-{f} & A_\delta \ar[d]^-{f_1} \\
C \ar[r]^-{1+\delta'h'} \ar[d]_-{f'} & C_{\delta'} \ar[d]^-{f_1'} \\
B \ar[r]^-1 & B_{\delta''}
}\]
Recalling that $\delta' = f \alpha^{-1} \delta g = f_1 \delta g$ the indicated $f$-components clearly work out (the lower square is the definition of $f_1'$, while the upper square differs from the defining one by $\delta g h' f$ on the top and $f_1 \delta g h'$ on the bottom and these clearly form a commutative square). This easily implies that $(f' f)_1 = f_1' f_1$ and the dual argument resolves the $g$-components. It remains to deal with the homotopies. Pushing instead along $1$ on the middle object yields
\[\xymatrix@C=4pc{
A \ar[r]^-{1+\delta(h+gh'f)} \ar[d]_-{(f,g,h)} & A_\delta \ar[d]^-{(f_2,g_2,h_2)} \\
C \ar[r]^-{1} \ar[d]_-{(f',g',h')} & C \ar[d]^-{(f',g',h')} \\
B \ar[r]^-1 & B
}\]
(note that these do not satisfy the dg-conditions since the necessary condition of $D_\ell\alpha$ commuting with maps in the image is not satisfied). The composition on the right has homotopy $h_2+g_2h'f_2$ and we wish to show that this equals $h_1+g_1h_1'f_1$. Now consider the pushouts of $(f,g,h)$ and $h'$ on the left side of the diagram below along the following pairs (the map $1 - \delta g h' f_2$ is computed as the composite of the remaining two maps in the commutative triangle at the top):
\[\xymatrix{
\bullet \ar[r]_-{1+\delta h} \ar@/^1pc/[rr]^-{1+\delta(h+gh'f)} \ar[d] & \bullet \ar[d] & \bullet \ar[l]^-{1 - \delta g h' f_2} \ar[r]_-1 \ar[d] & \bullet \ar[d] \\
\bullet \ar[r]_-1 & \bullet & \bullet \ar[l]^-1 \ar[r]_-{1 + f_1 \delta g h'} & \bullet \\
(f,g,h),h' & (f_1,g,h_1),h' & (f_2,g_2,h_2),h' & (f_1,?,?),h_1' \\
& & h'f_2 \ar@{|->}[r]^-{(1 + f_1 \delta g h')_*} & h_1'f_1 \\
& gh' & g_2h' \ar@{|->}[l]^-{(1 - \delta g h' f_2)_*}
}\]
Because of this, the indicated composites are mapped one to the other, but they are in fact equal, since $h'h'=0$ for the upper case and $h'f_2g_2h'=h'h'=0$ for the lower case. This enables us to replace $h_2 + g_2 h' f_2 = h_2 + g h' f_2$ and $h_1 + g_1 h_1' f_1 = h_1 + g_1 h' f_2$. Finally, to compare the right hand sides, we use the middle part of the above diagram to get a relation between $h_2$ and $h_1$:
\[h_2 = (1 - \underbrace{\delta g h' f_2) h_2}_0 = h_1 (1 - \delta g h' f_2) = h_1 - \underbrace{h_1 \delta g}_{g-g_1} h' f_2 = h_1 - (g - g_1) h' f_2,\]
which is just what we wanted.

Finally, we need to prove the equality $\delta'' = \hat\delta$ of the perturbations on targets. As observed in the proof of the perturbation lemma, the perturbation on the target equals $\widehat f \delta g$. Since we proved the two versions of $\widehat f$ to be equal, this yields equality of the perturbations.
\end{proof}

Now we will deal with an iterated application of the perturbation lemma; in the formalization above, this is functoriality in the horizontal direction. Assuming that both $\delta, \varepsilon \in \MC(A)$, we consider the following.
\[\xymatrix{
A \ar@<.5ex>[r]^-{\psi} \ar@<-.5ex>[r]_-{\theta} \ar[d] & A_\delta \ar@<.5ex>[r]^-{\psi} \ar@<-.5ex>[r]_-{\theta_\delta} \ar[d] & A_\varepsilon \ar[d] & & A \ar@<.5ex>[r]^-{\psi} \ar@<-.5ex>[r]_-{\theta} \ar[d] & A_\varepsilon \ar[d] \\
B \ar[r]_-\psi & B_{\delta'} \ar[r]_-\psi & B_{\varepsilon'} & & B \ar[r]_-\psi & B_{\varepsilon'}
}\]

\begin{theorem}
The two strong deformation retractions $A_\varepsilon \to B_{\varepsilon'}$ are equal.
\end{theorem}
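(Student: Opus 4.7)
The plan is to exploit the explicit formulas from the Perturbation Lemma and reduce everything to algebraic manipulation. Denote the first-step output by $(f_1,g_1,h_1) = \bigl(f(1+\delta h)^{-1},\,(1+h\delta)^{-1}g,\,h(1+\delta h)^{-1}\bigr)$, with target perturbation $\delta' = f_1 \delta g$. The iterated application of the Perturbation Lemma to $(f_1,g_1,h_1)$ with perturbation $\varepsilon - \delta$ of $A_\delta$ produces $(f_2,g_2,h_2)$ with additional target perturbation $f_2(\varepsilon - \delta) g_1$; the direct application to $(f,g,h)$ with perturbation $\varepsilon$ produces $(\tilde f,\tilde g,\tilde h)$ with target perturbation $\tilde f \varepsilon g$. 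The theorem amounts to the four equalities $f_2 = \tilde f$, $g_2 = \tilde g$, $h_2 = \tilde h$ and $\delta' + f_2(\varepsilon - \delta) g_1 = \tilde f \varepsilon g$.

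The first three equalities boil down to the algebraic identity
\[(1 + (\varepsilon - \delta) h_1)^{-1} = (1 + \delta h)(1 + \varepsilon h)^{-1},\]
which follows immediately from $\bigl(1 + (\varepsilon - \delta) h(1+\delta h)^{-1}\bigr)(1+\delta h) = 1 + \varepsilon h$. Substituting into $f_2 = f_1(1 + (\varepsilon - \delta)h_1)^{-1}$ gives $f_2 = f(1+\varepsilon h)^{-1} = \tilde f$, and the same identity (respectively its left--right dual) yields $h_2 = \tilde h$ and $g_2 = \tilde g$. As a side benefit, the invertibility hypothesis needed for the iterated Lemma (invertibility of $1 + (\varepsilon - \delta)h_1$) follows automatically from invertibility of $1 + \varepsilon h$ and $1 + \delta h$, so no extra assumption is required.

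The equality of target perturbations is the only place where more than a one-line substitution is needed. Using $g_1 - g = -h_1 \delta g$, immediate from $g_1 = (1+h\delta)^{-1} g$, and $f_1 = f_2\bigl(1+(\varepsilon - \delta)h_1\bigr)$, both sides of $\delta' + f_2(\varepsilon-\delta)g_1 = \tilde f \varepsilon g$ expand and the difference collapses: the cross-terms $\pm f_2 \varepsilon h_1 \delta g$ and $\pm f_2 \delta h_1 \delta g$ cancel out, leaving $0$.

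The main obstacle I anticipate is purely organizational: one must carefully distinguish the two sdr's involved (the original $(f,g,h)$ and its first perturbation $(f_1,g_1,h_1)$) and keep straight which variable plays which role at each step. No new homotopical content is required beyond what is already packaged into the Perturbation Lemma, so once the bookkeeping is sorted out, the proof is a direct verification in the free graded algebra on $f,g,h,\delta,\varepsilon$ modulo the sdr relations $fg=1$, $fh=0$, $hg=0$, $hh=0$. Note that the auxiliary maps $\theta$ at the top of the two diagrams need not coincide, which is consistent with the warning given after the Perturbation Lemma that no strong universal property is expected for $\theta$ itself.
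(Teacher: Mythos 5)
Your proof is correct and rests on exactly the same key identity as the paper's, namely $(1+(\varepsilon-\delta)h_1)(1+\delta h)=1+\varepsilon h$; the paper phrases this as the composability of the pushouts along the $\alpha$-maps, leaning on the framework set up in the preceding theorem, and leaves the rest implicit. Your explicit verification of the individual components --- including the dual identity $(1+h\delta)(1+h_1(\varepsilon-\delta))=1+h\varepsilon$ for the $g$-component and the cancellation establishing $\delta'+f_2(\varepsilon-\delta)g_1=\tilde f\varepsilon g$ for the target perturbations, both of which check out --- fills in details the paper's one-line proof omits.
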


\begin{proof}
As in the previous proof, we study pushouts along the $\alpha$-maps. In the second case, this is $1 + \varepsilon h$. In the first case, this is the composition
\[(1 + (\varepsilon - \delta) h_1) (1 + \delta h) = 1 + \delta h + (\varepsilon - \delta) h = 1 + \varepsilon h,\]
where $h_1 = h (1 + \delta h)^{-1}$ is the homotopy of the middle sdr, obtained by pushing $h$ along $1 + \delta h$.
\end{proof}

The tensor product of sdr's
\[F^\ell = (f^\ell,g^\ell,h^\ell) \colon A^\ell \to B^\ell, \quad F^r = (f^r, g^r, h^r) \colon A^r \to B^r\]
is given by
\[F^\ell \otimes F^r = (f^\ell \otimes f^r, g^\ell \otimes g^r, h^\ell \otimes 1 + g^\ell f^\ell \otimes h^r) \colon A^\ell \otimes A^r \to B^\ell \otimes B^r.\]
Note that this tensor product is not symmetric, but it can be shown to be associative. Alternatively, it can be described as the composition 
\[A^\ell \otimes A^r \xlra{F^\ell \otimes 1} B^\ell \otimes A^r \xlra{1 \otimes F^r} B^\ell \otimes B^r.\]
Given two perturbations $\delta^\ell$ on $A^\ell$ and $\delta^r$ on $A^r$, we obtain a perturbation $\delta^\ell \otimes 1 + 1 \otimes \delta^r$ on $A^\ell \otimes A^r$. For the following theorem, we define the result of the perturbation lemma applied to a sdr $F$ and a perturbation $\delta$ by $F_\delta$.

\begin{theorem}
$(F^\ell \otimes F^r)_{\delta^\ell \otimes 1 + 1 \otimes \delta^r} = F^\ell_{\delta^\ell} \otimes F^r_{\delta^r}$.
\end{theorem}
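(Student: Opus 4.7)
The plan is to decompose the tensor product as a vertical composition
\[F^\ell \otimes F^r \;=\; (1_{B^\ell} \otimes F^r) \circ (F^\ell \otimes 1_{A^r})\]
with middle object $B^\ell \otimes A^r$, and then invoke the already established compatibility of the perturbation lemma with vertical composition. This reduces the theorem to two sub-cases in which one factor is an identity strong deformation retraction; the vertical composite of the two perturbed outputs, expanded by the tensor formula for sdrs, will then reconstruct $F^\ell_{\delta^\ell} \otimes F^r_{\delta^r}$.

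The first sub-case applies the perturbation lemma to $F^\ell \otimes 1_{A^r}$ with source perturbation $\delta^\ell \otimes 1 + 1 \otimes \delta^r$. The homotopy is just $h = h^\ell \otimes 1$, and a Koszul computation gives $\delta h = \delta^\ell h^\ell \otimes 1 - h^\ell \otimes \delta^r$. Writing $u = \delta^\ell h^\ell \otimes 1$ and $v = h^\ell \otimes \delta^r$, the vanishings $uv = 0$ and $v^2 = 0$ (both coming from $h^\ell h^\ell = 0$) factor $1 + \delta h = (1+u)(1-v)$ and yield
\[(1 + \delta h)^{-1} \;=\; (1 + \delta^\ell h^\ell)^{-1} \otimes 1 \;+\; \widehat{h^\ell} \otimes \delta^r.\]
Postmultiplying by $f^\ell \otimes 1$ and $h^\ell \otimes 1$, and using the dual factorization for $\widehat g = (1 + h\delta)^{-1} g$, the cross terms vanish thanks to $f^\ell h^\ell = h^\ell g^\ell = h^\ell h^\ell = 0$, so the output sdr is $F^\ell_{\delta^\ell} \otimes 1_{A^r_{\delta^r}}$. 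The induced target perturbation $\widehat f \delta g$ then simplifies to $\delta^{\ell\prime} \otimes 1 + 1 \otimes \delta^r$ via $\widehat{f^\ell} g^\ell = f^\ell g^\ell = 1$.

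The second sub-case is formally dual: applying the perturbation lemma to $1_{B^\ell} \otimes F^r$ with source perturbation $\delta^{\ell\prime} \otimes 1 + 1 \otimes \delta^r$ produces $1_{B^\ell_{\delta^{\ell\prime}}} \otimes F^r_{\delta^r}$ with target perturbation $\delta^{\ell\prime} \otimes 1 + 1 \otimes \delta^{r\prime}$. Taking the vertical composite of the two outputs and unwinding the tensor formula for sdrs produces $F^\ell_{\delta^\ell} \otimes F^r_{\delta^r}$ on the nose. I expect the main obstacle to be the first sub-case: although each ingredient is elementary, one must carefully track Koszul signs and the successive application of the annihilations $fh = hg = h^2 = 0$ to see that every cross term in $(1 + \delta h)^{-1}$ and its post-/pre-multiplications is killed at exactly the right stage.
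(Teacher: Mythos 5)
Your proof is correct, and its skeleton --- decomposing $F^\ell \otimes F^r$ as the vertical composite $(1_{B^\ell} \otimes F^r) \circ (F^\ell \otimes 1_{A^r})$ and invoking compatibility with vertical composition --- is exactly the paper's. Where you diverge is in the treatment of the two sub-cases. The paper stays abstract: it further splits the mixed perturbation $\delta^\ell \otimes 1 + 1 \otimes \delta^r$ into two successive one-factor perturbations using the iterated-perturbation (horizontal functoriality) theorem, disposes of the $1 \otimes \delta$ case by cocontinuity of the perturbation-lemma functor (tensoring with a fixed object being a weighted colimit), and only computes in the $\delta \otimes 1$ case, where $\alpha = 1 + \delta \otimes h$ and $(1 \otimes f)\alpha^{-1} = 1 \otimes f$ is immediate from $fh = 0$. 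You instead invert $1 + \delta h$ for the full mixed perturbation in one go, via the factorization $1 + \delta h = (1 + \delta^\ell h^\ell \otimes 1)(1 - h^\ell \otimes \delta^r)$; your Koszul sign $\delta h = \delta^\ell h^\ell \otimes 1 - h^\ell \otimes \delta^r$ is right, the annihilations $uv = v^2 = 0$ do hold, and the cross terms in $\widehat f$, $\widehat g$, $\widehat h$ and in the target perturbation are indeed killed by $f^\ell h^\ell = h^\ell g^\ell = h^\ell h^\ell = 0$ (together with $\widehat{f^\ell} g^\ell = f^\ell g^\ell = 1$), so the computation closes. Your route avoids both the iterated-perturbation theorem and the cocontinuity argument at the price of a slightly heavier but entirely mechanical calculation; the paper's route buys a computation-free treatment of the $1 \otimes \delta$ case but leans on the two preceding functoriality theorems. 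One small caveat: the second sub-case is not literally ``formally dual'' to the first --- the homotopy there is $1 \otimes h^r$ and the surviving cross term $\delta^{\ell\prime} \otimes h^r$ sits on the other side of the factorization, being killed by $f^r h^r = 0$ and $(h^r)^2 = 0$ rather than by $h^\ell h^\ell = 0$ --- but the same method goes through verbatim.
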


\begin{proof}
This follows from the previous two theorems and two special cases where a tensor multiple of an sdr $1_C \otimes F \colon C \otimes A \to C \otimes B$ is perturbed either by $1 \otimes \delta$ or $\delta \otimes 1$. In the first case, perturbing $1 \otimes F$ by $1 \otimes \delta$ yields $1_C \otimes F_\delta$ by cocontinuity of the perturbation lemma. In the second case, when perturbing $1 \otimes F$ by $\delta \otimes 1$, the components $f$ and $h$ are obtained by pushing along $\alpha = 1 \otimes 1 + (\delta \otimes 1)(1 \otimes h) = 1 \otimes 1 + \delta \otimes h$ and so $(1 \otimes f) \alpha ^{-1} = 1 \otimes f$ and similarly for $1 \otimes h$; this gives $(1_C \otimes F)_{\delta \otimes 1} = 1_{C_\delta} \otimes F$.
\end{proof}

\end{document}